\newtheorem{lem}{Lemma}[section]
\newtheorem{thm}[lem]{Theorem}
\newtheorem{cor}[lem]{Corollary}
\newtheorem{rem}{Remark}
\theoremstyle{definition}
 \DeclareMathAlphabet{\mathsfsl}{OT1}{cmss}{m}{sl}
 \newcommand{\dif}{\mathrm{d}}
\numberwithin{equation}{section}
\begin{document}

\title{\bf{The Optimal Constant in Generalized Hardy's Inequality}}
\author{\rm\small
\noindent Ying LI\,\,\,\,and\,\,\,\,Yong-Hua MAO\\
\noindent \footnotesize School of Mathematical Sciences, Xiangtan University, \\
\noindent \footnotesize Xiangtan, Hunan 411105, People's Republic of China\\
\noindent \footnotesize School of Mathematical Sciences, Beijing Normal University, \\
\noindent \footnotesize Laboratory of Mathematics and Complex Systems, Ministry of Education\\
\noindent \footnotesize Beijing 100875, People's Republic of China
}
\date{}
 \maketitle
\maketitle \noindent {\centerline{\bf Abstract }}

We obtain the sharp factor of the two-sides estimates of the optimal constant in generalized Hardy's inequality with two general Borel measures on $\mathbb{R}$, which generalizes and unifies the known continuous and discrete cases.
\maketitle

{\bf Keywords and phrases:} Hardy's inequality, optimal constant, Cantor set

{\bf MR(2010) Subject Classification:} 26D10, 26D15, 26A30
\section{Introduction}\label{introd}

The Hardy's inequality is a powerful technical tool not only in advanced theoretical studies of the spectrum of non-negative self-adjoint differential operators such as elliptic operators \cite{davies 1995, rl}, but also in the study of probability such as the stability of diffusion processes or birth-death processes, please refer to \cite[Chapter 6]{chen05} and the references therein. Our motivation is to study the stability of  generalized diffusion processes. However, we shall deal with this problem in separate paper.

For $p>1$ and any non-negative number sequence $\{a_n:\,n\geq 1\}$ such that $\sum_{n=1}^{+\infty} a_n^p<+\infty$, Hardy's inequality was given by
\begin{equation}\label{dishardy}
  \sum_{n=1}^{+\infty}\left(\frac{1}{n}\sum_{ k=1}^n a_k\right)^p\leq \left(\frac{p}{p-1}\right)^p\sum_{n=1}^{+\infty} a_n^p
\end{equation}
in \cite{Hardy}, the optimal constant $\left(\frac{p}{p-1}\right)^p$ was fixed by Laudan, Schur and Hardy in \cite{LSH}.

The continuous analogue of Hardy's inequality (\ref{dishardy}) was introduced in \cite{Hardy} as
\begin{equation}\label{conhardy}
\int_0^{+\infty}\left[\frac{1}{x}\int_0^xf(t)\dif t\right]^p\dif x\leq\left(\frac{p}{p-1}\right)^p\int_0^{+\infty} f(x)^p\dif x
\end{equation}
for $p>1$ and $f\geq0$ such that $f\in L^p(\mathbb{R}^+)$, the optimal constant $\left(\frac{p}{p-1}\right)^p$ was fixed by Hardy in \cite{Hardy2}.

Hardy's inequality has been generalized in various direction. 
In \cite{prok}, Prokhorov gave necessary and sufficient conditions for validity of the Hardy's inequality with three measures.  He also claimed that the Hardy's inequality with three measures can be reduced to the following case with two measures. 
Let $1<p\leq q<+\infty$,~$\mu,\, \nu$ be $\sigma$-finite Borel measures on $\mathbb{R}$, consider
\begin{equation}\label{generalized Hardy}
 \left[ \int_{\mathbb{R}}\left(\int_{(-\infty,x)}f\dif\nu\right)^q\dif\mu(x)\right]^{1/q}
 \leq A\left(\int_{\mathbb{R}} f^p\dif\nu\right)^{1/p}.
\end{equation}
A two-sided estimate for the best constant $A$ can be given as
\begin{equation}\label{kpq 0}
 B\leq A\leq k(q,p)B,
\end{equation}
where the constant $k(q,p)$ can be taken as $p^{1/q}(p^*)^{1/p^*}$ and $B$ is defined in (\ref{B}) below. This findings generalize many existing estimates. For example, please refer to \cite{Brad, muck}  for  both $\mu$ and $\nu$ absolutely continuous with respect to Lebesgue measure and refer to \cite{myh2, miclo} for  both $\mu$ and $\nu$ discrete measures.

When  $\mu$ and $\nu$ are both absolutely continuous with respect to Lebesgue measure,
Maz'ja (\cite{Maz}) presented the factor  $k(q,p)$ as $(q^*)^{1/p^*}q^{1/q}$ for $1<p<q<+\infty$,
Opic and Kufner (\cite{opic}) improved it to $\left(1+q/p^*\right)^{1/q}\left(1+p^*/q\right)^{1/p^*}$ for $1<p\leq q<+\infty$, and Chen (\cite{chen15}) obtained a sharp factor as
\begin{equation}\label{kqp}
   k_{q,p}=\left(\frac{r}{B(1/r,(q-1)/r)}\right)^{1/p-1/q},
 \end{equation}
where $B(a,b)=\int_0^1x^{a-1}(1-x)^{b-1}\dif x$ and $r=q/p-1$.

When $\mu$ and $\nu$ are both discrete measures, Liao (\cite{lzw}) gave the factor $k(q,p)$ as $k_{q,p}$ in (\ref{kqp}) for $1<p\leq q<+\infty$.

A natural question is whether one can also improve the factor  $k(q,p)$ to the sharp $k_{q,p}$ for the above Hardy's inequality (\ref{generalized Hardy}) concerning two general $\sigma$-finite Borel measure? In the present paper, we will give an affirmative answer to this question as follows.
\begin{thm}\label{sharp estimate}
Let $1<p\leq q<+\infty$, $\mu$ and $\nu$ be two $\sigma$-finite Borel measures on $\mathbb{R}$.
Set
 \begin{equation}\label{B}
   B=\sup_{x\in \mathbb{R}}\nu((-\infty,x])^{1/p^*}\mu([x,+\infty))^{1/q}.
 \end{equation}
If $A$ is the optimal constant such that for all $f:\,\mathbb{R}\rightarrow\mathbb{R}$,
\begin{equation}\label{hd2}
  \left[\int_{\mathbb{R}}\left|\int_{(-\infty,x)}f(y)\nu(\dif y)\right|^q\mu(\dif x)\right]^{1/q}\leq A\left[\int_{\mathbb{R}}|f(x)|^p\nu(\dif x)\right]^{1/p},
\end{equation}
then
\begin{equation*}
  B\leq A\leq k_{q,p}B
\end{equation*}
with $k_{q,p}$ defined in $(\ref{kqp})$.
\end{thm}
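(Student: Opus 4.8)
The plan is to prove the two inequalities separately, the lower bound $B\le A$ being elementary and the upper bound $A\le k_{q,p}B$ carrying all the difficulty.

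\emph{Lower bound.} Here I would simply feed suitable indicator functions into \eqref{hd2}. Fixing $x_0\in\mathbb{R}$ and taking $f=\mathbf 1_{(-\infty,x_0)}$, the inner integral $\int_{(-\infty,x)}f\,\dif\nu$ equals $\nu((-\infty,x_0))$ for every $x\ge x_0$, so the left-hand side of \eqref{hd2} is at least $\nu((-\infty,x_0))\,\mu([x_0,+\infty))^{1/q}$, while the right-hand side equals $A\,\nu((-\infty,x_0))^{1/p}$. Rearranging gives $\nu((-\infty,x_0))^{1/p^{*}}\mu([x_0,+\infty))^{1/q}\le A$, and letting $x_0$ range over $\mathbb{R}$, together with the usual one-sided limiting argument identifying the one-sided limits of the distribution functions, reconciles the open inner integral with the closed intervals in \eqref{B} and yields $B\le A$.

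\emph{Upper bound.} Here I may assume $B<+\infty$ and, replacing $f$ by $|f|$, that $f\ge0$; by truncation and monotone approximation I would further reduce to the case that $\mu$ and $\nu$ are finite and compactly supported. Writing $N_-(x)=\nu((-\infty,x))$ and $F(x)=\int_{(-\infty,x)}f\,\dif\nu$, the heart of the argument is a single application of Hölder's inequality with a power of the cumulative distribution as weight: for a free exponent $s\ge0$,
\[
 F(x)=\int_{(-\infty,x)}\! f\,N_-^{-s/p}\cdot N_-^{s/p}\,\dif\nu
 \le\Big(\int_{(-\infty,x)}\! f^{p}N_-^{-s}\,\dif\nu\Big)^{1/p}\Big(\int_{(-\infty,x)}\! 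N_-^{\,sp^{*}/p}\,\dif\nu\Big)^{1/p^{*}}.
\]
The second factor is controlled by the elementary Stieltjes estimate $\int_{(-\infty,x)}N_-^{\gamma}\,\dif\nu\le(\gamma+1)^{-1}N_-(x)^{\gamma+1}$ with $\gamma=sp^{*}/p$. I would then raise to the power $q$, integrate against $\mu$, apply the layer-cake identity $\int\psi\,\dif\mu=\int\mu([x,+\infty))\,\dif\psi(x)$ for increasing $\psi$, and insert the defining inequality of $B$ in the form $\mu([x,+\infty))\le B^{q}N_-(x)^{-q/p^{*}}$. This reduces the whole estimate to a one-dimensional integral inequality; optimizing over $s$, the best constant it produces coincides with the optimal value of the scale-invariant weighted Hardy inequality with pure power weight (Bliss's extremal problem), whose explicit minimizers yield the Beta function and the factor $k_{q,p}$ of \eqref{kqp}.

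\emph{Main obstacle.} The genuine difficulty, and the point where the generality of $\mu$ and $\nu$ bites, is the presence of atoms. Every Stieltjes manipulation above — the power estimate $\int N_-^{\gamma}\dif\nu\le(\gamma+1)^{-1}N_-^{\gamma+1}$, the product rule used inside the integration by parts, and the layer-cake identity — holds with equality only in the absolutely continuous case and must be replaced by a one-sided inequality for general non-decreasing distribution functions. The crucial observation, which I would isolate as a preliminary lemma, is that by consistently using the \emph{left-continuous} representative $N_-$ together with the convexity of $t\mapsto t^{\gamma+1}$, all these comparisons point in the favorable direction, namely toward the upper bound $k_{q,p}B$. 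Verifying that every such comparison cooperates, and that the truncation and approximation pass to the limit without degrading the constant, is where the bulk of the technical work lies.
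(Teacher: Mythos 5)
Your lower bound is the standard test-function computation and is essentially what the paper delegates to Prokhorov's Theorem~1; note, though, that $f=I_{(-\infty,x_0)}$ produces $\nu((-\infty,x_0))^{1/p^*}\mu([x_0,+\infty))^{1/q}$ with an \emph{open} interval for $\nu$, and no limiting argument upgrades this to the closed interval appearing in (\ref{B}) when $\nu$ has an atom at $x_0$ (take $\mu=\nu=\delta_0$ to see the issue); this open/closed mismatch is already present in the theorem's statement, so it is not specific to your argument.

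The genuine gap is in the upper bound. The scheme you describe --- H\"older's inequality against a power $N_-^{s/p}$ of the distribution function, the Stieltjes estimate $\int N_-^{\gamma}\,\dif\nu\le(\gamma+1)^{-1}N_-^{\gamma+1}$, layer-cake, and optimization over the free exponent $s$ --- is precisely the classical Muckenhoupt--Maz'ja--Opic--Kufner argument, and for $1<p<q$ it is known to yield only factors such as $\left(1+q/p^*\right)^{1/q}\left(1+p^*/q\right)^{1/p^*}$, never the Beta-function constant $k_{q,p}$ of (\ref{kqp}). The failure is structural: the Bliss extremals $f(x)=\gamma(\delta x^r+1)^{-(r+1)/r}$ do not saturate your H\"older step for any choice of $s$, so each link in the chain loses a fixed factor that no optimization over $s$ recovers; the assertion that the reduced one-dimensional inequality ``produces'' the Bliss constant is exactly the step that does not hold. (For $p=q$ the method does give the sharp $p^{1/p}{p^*}^{1/p^*}$, but that case was already known.) The paper instead obtains $k_{q,p}$ by invoking Bliss's variational lemma (Lemma \ref{Bli}) as a black box: it transports the whole inequality to Lebesgue measure on $(0,+\infty)$ via the generalized inverse $S^{-1}$ of $S(x)=\nu((-\infty,x])$ and the image-measure theorem, replaces $\mu$ by the majorant $\widetilde{\nu}$ with $\widetilde{\nu}((x,+\infty))=S(x)^{-q/p^*}$ using a tail-domination lemma, and controls atoms by the one-sided relations $S(S^{-1}(y)-)\le y\le S(S^{-1}(y))$, all of which point in the favorable direction. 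You correctly sense that atoms are a technical issue, but they are not the main obstacle; the missing idea is the reduction to Bliss's extremal problem itself (together with the separate treatment of $\nu(\mathbb{R})<+\infty$, which the paper handles by a lemma of Chen rather than by truncation to compactly supported measures).
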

\begin{rem}\label{the rem of thm}

$(1)$ According to {\rm\cite[p.809]{lzw}}, when $p=q$, the factor $k_{q,p}=p^{1/p}{p^*}^{1/p^*}$, which is consistent with the result in {\rm\cite{chen15,lzw,myh2,miclo,muck,opic,prok}}.

$(2)$ By substituting the interval $(x,+\infty)$ to $(-\infty,x)$ in the left side of $(${\rm\ref{hd2}}$)$, we can get a dual form of Theorem {\rm\ref{sharp estimate}}.

$(3)$ We can also present the sharp factor of the two-side estimate of the optimal constant in the Hardy's inequality with three measures just as in {\rm\cite{prok}}.
\end{rem}

To obtain the sharp factor in (\ref{kqp}), we use the integral transform theorem to explore a new version of Bliss's lemma (see Lemma \ref{generalized bliss}).  Both this new version of Bliss's lemma and its proof are novel as far as we know.

Now, we give some typical examples as applications of the generalized Hardy's inequality in Theorem \ref{sharp estimate}.
In these applications, $\mu$ and $\nu$ can be discrete measures, continuous measures (absolutely continuous w.r.t. Lebesgue measure), and even Cantor measures which are neither continuous nor discrete(see section 3). Additionally, we give the analogue forms as in (\ref{dishardy}) and (\ref{conhardy}) when $p=q$.

\begin{cor}\label{prop1}
Let $\lambda$ denote the standard Bernoulli measure on Cantor set in $[0,+\infty)$. For any non-negative function $f$ and $p>1$, we have
\begin{equation}\label{cantor11}
  \int_0^{+\infty}\left(\frac{1}{\lambda([0,x])}\int_0^x f(t)\lambda(\dif t)\right)^p\lambda(\dif x)
  \leq \left(\frac{p}{p-1}\right)^p\int_0^{+\infty} f(x)^{p}\lambda(\dif x).
\end{equation}
However,  neither the inequality
\begin{equation}\label{cantor12}
  \left[\int_0^{+\infty}\left(\frac{1}{\lambda([0,x])}\int_0^x f(t)\lambda(\dif t)\right)^q\lambda(\dif x)\right]^{1/q}
  \leq A\left[\int_0^{+\infty} f(x)^{p}\lambda(\dif x)\right]^{1/p}
\end{equation} for $1<p<q<+\infty$
nor the inequality
\begin{equation}\label{cantor2}
  \left[\int_0^{+\infty}\left(\frac1x\int_0^xf(t)\dif t\right)^{q}\lambda(\dif x)\right]^{1/q}
  \leq A\left[\int_0^{+\infty} f(x)^{p}\dif x\right]^{1/p}
\end{equation}for $1<p\leq q<+\infty$ holds $($since $A=+\infty)$.
\end{cor}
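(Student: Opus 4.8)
The plan is to deduce all three assertions from Theorem \ref{sharp estimate}, after rewriting each inequality in the canonical form (\ref{hd2}) by absorbing the normalizing weight into the outer measure. Concretely, for (\ref{cantor11}) take $\nu=\lambda$ and $\mu(\dif x)=\lambda([0,x])^{-p}\lambda(\dif x)$; for (\ref{cantor12}) take $\nu=\lambda$ and $\mu(\dif x)=\lambda([0,x])^{-q}\lambda(\dif x)$; and for (\ref{cantor2}) take $\nu$ to be Lebesgue measure and $\mu(\dif x)=x^{-q}\lambda(\dif x)$. Since Theorem \ref{sharp estimate} gives $B\le A\le k_{q,p}B$, the optimal constant $A$ is finite exactly when $B$ is finite, so each claim reduces to computing or estimating the corresponding $B$ in (\ref{B}). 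The one tool I would isolate first is the change of variables $u=G(x):=\lambda([0,x])$: because $G$ is the (continuous) distribution function of the nonatomic measure $\lambda$, its push-forward $G_*\lambda$ is Lebesgue measure on $[0,1]$, so $\int_{[0,x]}h\,\dif\lambda=\int_0^{G(x)}(h\circ G^{-1})\,\dif u$ for every $h$.

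For (\ref{cantor11}) and (\ref{cantor12}), where $\nu=\lambda$, this substitution turns the left-hand side into the Lebesgue functional $\big[\int_0^1(u^{-1}\int_0^u\tilde f\,\dif s)^{q}\,\dif u\big]^{1/q}$ and the right-hand side into $A\big[\int_0^1\tilde f^{\,p}\,\dif u\big]^{1/p}$, with $\tilde f=f\circ G^{-1}$. When $p=q$ this is precisely the classical continuous Hardy inequality (\ref{conhardy}) on $[0,1]$ (extend $\tilde f$ by zero), which holds with constant $(p^*)^{p}$; reversing the substitution yields (\ref{cantor11}). When $p<q$ I would instead compute the transformed best constant $B=\sup_{s\in(0,1)}s^{1/p^*}\big(\int_s^1 u^{-q}\,\dif u\big)^{1/q}$ and check that the integrand behaves like $s^{1/q-1/p}$ as $s\to 0^+$; since $1/q-1/p<0$ this supremum is $+\infty$, so $A=+\infty$ and (\ref{cantor12}) cannot hold.

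For (\ref{cantor2}) the two measures no longer match, since $\nu$ is Lebesgue while the outer weight lives on the Cantor set, so a single substitution cannot reduce it to a known case. Here I would estimate $B=\sup_{x>0}x^{1/p^*}\big(\int_{[x,1]}y^{-q}\,\lambda(\dif y)\big)^{1/q}$ directly from the self-similar structure of $\lambda$. Using $\lambda([3^{-n},3^{-(n-1)}])=2^{-n}$ and bounding $y^{-q}\ge 3^{(n-1)q}$ on that block, the choice $x=3^{-n}$ gives $B\ge \tfrac13\,(3^{1/p}2^{-1/q})^{n}$. The ratio $3^{1/p}2^{-1/q}$ exceeds $1$ precisely when $q/p>\log 2/\log 3$, i.e. when $q/p$ exceeds the Hausdorff dimension of the Cantor set; as $q/p\ge 1>\log2/\log3$ this always holds, so $B=+\infty$ and again $A=+\infty$, giving the failure of (\ref{cantor2}) for all $1<p\le q<+\infty$.

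The routine parts are the change of variables and the two single-variable suprema in the matched-measure cases. The real obstacle is (\ref{cantor2}): because the inner and outer measures disagree, one cannot avoid the local scaling of the Bernoulli measure, and the whole dichotomy turns on the arithmetic comparison $q/p>\log2/\log3$. Making the lower bound for $B$ clean, namely choosing the right triadic blocks and verifying the mass $2^{-n}$ on $[3^{-n},3^{-(n-1)}]$, is where I would spend the most care; everything else follows from Theorem \ref{sharp estimate}.
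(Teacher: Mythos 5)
Your proposal is correct and follows essentially the same route as the paper: rewrite each inequality in the form of Theorem \ref{sharp estimate} with the same choices of $\mu$ and $\nu$, compute $B$ via the change of variables $u=\lambda([0,x])$ (pushing $\lambda$ forward to Lebesgue measure through its continuous distribution function) in the two matched-measure cases, and exhibit the triadic block $[3^{-n},3^{-n+1}]$ of $\lambda$-mass $2^{-n}$ to force $B=+\infty$ in (\ref{cantor2}) -- your lower bound $\tfrac13(3^{1/p}2^{-1/q})^n$ is in fact the carefully justified version of the paper's estimate, which reaches the same conclusion with a slightly different constant. One small correction: the corollary concerns the \emph{extended} Bernoulli measure on the Cantor set in $[0,+\infty)$, for which $\lambda([0,x])\uparrow+\infty$, so your substitution should be carried out on the half-line rather than on $[0,1]$; this changes nothing substantive, since the divergence of $B$ in (\ref{cantor12}) already occurs as $x\to0^{+}$ and the reduction of (\ref{cantor11}) to the classical continuous Hardy inequality goes through verbatim on $[0,+\infty)$, but as written your argument only proves (\ref{cantor11}) for $f$ supported in $[0,1]$.
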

Observing the proof of Corollary \ref{prop1}, one can get that both (\ref{cantor11}) and (\ref{cantor12}) hold for any $\sigma$-finite Borel measures such that $\Lambda(x):=\lambda([0,x])$ being a continuous increasing function.

By taking one measure discrete and another one absolutely continuous with respect to the Lebesgue measure, we have the  two following mixed forms of Hardy's inequalities.
\begin{cor}\label{mixed forms}
For any non-negative function $f$ and $p>1$, we have
\begin{equation}\label{mixed1}
   \sum_{n=1}^{+\infty}\left(\frac{1}{n}\int_1^n f(t)\dif t\right)^p\leq \left(\frac{p}{p-1}\right)^p\int_1^{+\infty} f^p(x)\dif x.
\end{equation}
And for $1<p\leq q<+\infty$,
  \begin{equation}\label{mixed2}
    \left[\int_1^{+\infty}\left(\frac{1}{x}\sum_{1\leq n< x}f(n)\right)^q\dif x\right]^{1/q}\leq A\left[\sum_{n=1}^{+\infty} f^p(n)\right]^{1/p}
  \end{equation}
holds with $(q-1)^{-1/q}\leq A \leq k_{q,p}(q-1)^{-1/q}$.
\end{cor}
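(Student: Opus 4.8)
The plan is to read off both inequalities from Theorem \ref{sharp estimate} by making the right choices of the pair $(\mu,\nu)$ so that the two sides of (\ref{hd2}) literally become the two sides of (\ref{mixed1}) and (\ref{mixed2}); the only real work is then to evaluate (or estimate) the quantity $B$ in (\ref{B}) for these concrete measures.

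For (\ref{mixed1}) I would take $p=q$, let $\nu$ be Lebesgue measure restricted to $[1,+\infty)$ and let $\mu=\sum_{n\ge1}n^{-p}\delta_n$ be the discrete measure putting mass $n^{-p}$ at each positive integer. Then $\int_{(-\infty,x)}f\,\dif\nu=\int_1^x f(t)\,\dif t$, so the left-hand side of (\ref{hd2}) raised to the power $p$ is exactly $\sum_{n\ge1}\big(\frac1n\int_1^n f\,\dif t\big)^p$, while the right-hand side inside the bracket is $\int_1^{+\infty}f^p\,\dif x$. By Remark \ref{the rem of thm}(1) the factor is $k_{p,p}=p^{1/p}(p^*)^{1/p^*}$, so it suffices to show $B\le(p-1)^{-1/p}$, because then $A\le k_{p,p}B\le p^{1/p}(p^*)^{1/p^*}(p-1)^{-1/p}=p^*=p/(p-1)$, and (\ref{mixed1}) follows by raising (\ref{hd2}) to the $p$. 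Here $\nu((-\infty,x])=x-1$ and, for $x\in(m,m+1]$, $\mu([x,+\infty))=\sum_{n\ge m+1}n^{-p}$; since the first factor increases and the second is constant on each such interval, $B^p=\sup_{m\ge1}m^{p-1}\sum_{n\ge m+1}n^{-p}$, and the elementary integral comparison $\sum_{n\ge m+1}n^{-p}\le\int_m^{+\infty}t^{-p}\,\dif t=m^{1-p}/(p-1)$ gives $B^p\le 1/(p-1)$ as required.

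For (\ref{mixed2}) I would reverse the roles: let $\nu=\sum_{n\ge1}\delta_n$ be counting measure on the positive integers and let $\mu(\dif x)=x^{-q}\,\dif x$ on $[1,+\infty)$. Then $\int_{(-\infty,x)}f\,\dif\nu=\sum_{1\le n<x}f(n)$ and $\dif\mu$ supplies the weight $x^{-q}$, so (\ref{hd2}) is precisely (\ref{mixed2}); thus Theorem \ref{sharp estimate} gives $B\le A\le k_{q,p}B$ and it remains only to compute $B$. With $\nu((-\infty,x])=\lfloor x\rfloor$ and $\mu([x,+\infty))=x^{1-q}/(q-1)$ we get $B=(q-1)^{-1/q}\sup_{x\ge1}\lfloor x\rfloor^{1/p^*}x^{-1/q^*}$. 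On $[m,m+1)$ the function $\lfloor x\rfloor^{1/p^*}x^{-1/q^*}=m^{1/p^*}x^{-1/q^*}$ decreases, so its supremum is attained at the integers and equals $\sup_{m\ge1}m^{1/p^*-1/q^*}$; since $p\le q$ forces $p^*\ge q^*$ and hence $1/p^*-1/q^*\le0$, this supremum is $1$, attained at $m=1$. Therefore $B=(q-1)^{-1/q}$, and the claimed bounds $(q-1)^{-1/q}\le A\le k_{q,p}(q-1)^{-1/q}$ are immediate.

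Both applications are routine once the measures are chosen, so the only place that needs genuine care—the main (if modest) obstacle—is the evaluation of $B$: in (\ref{mixed1}) one must confirm that the discrete tail sum $\sum_{n\ge m+1}n^{-p}$ is controlled by the matching integral so that the $m$-dependence cancels exactly, and in (\ref{mixed2}) one must check that the combined exponent $1/p^*-1/q^*$ is nonpositive (this is exactly where the hypothesis $p\le q$ enters) so that the supremum sits at $x=1$ rather than running off to infinity.
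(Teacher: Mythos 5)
Your proposal is correct and follows essentially the same route as the paper: the same choices of $(\mu,\nu)$ in both cases, the same integral-comparison bound $\sum_{k\ge m+1}k^{-p}\le m^{1-p}/(p-1)$ for (\ref{mixed1}), and the same evaluation $B=(q-1)^{-1/q}$ via $\sup_{x\ge1}[x]^{1/p^*}x^{-1/q^*}=1$ for (\ref{mixed2}). The only (immaterial) difference is that the paper also verifies the matching lower bound $B\ge(p-1)^{-1/p}$ in the first case, which is not needed for the stated inequality.
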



\section{Proof of Theorem \ref{sharp estimate}}

In \cite{chen15, lzw}, a key step in improving the factor to sharp is using the following Bliss lemma \cite{Bliss}  directly or extending it to
the case of discrete measures.
\begin{lem}\label{Bli}
 Let $1<p<q<+\infty$ and $f$ be a non-negative function on $[0,+\infty)$. Then we have
 \begin{equation*}
   \left[\int_0^{+\infty} \dif(-x^{-q/p^*})\left(\int_0^xf(y)\dif y\right)^q\right]^{1/q}\leq k_{q,p}\left[\int_0^{+\infty} f(x)^p\dif x\right]^{1/p}.
 \end{equation*}
Moreover, the optimal constant attains when
 \begin{equation*}
   f(x)=\gamma(\delta x^r+1)^{-(r+1)/r}
 \end{equation*}
with $r=q/p-1$ and $\gamma$, $\delta$ being non-negative constants.
\end{lem}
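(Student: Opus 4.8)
The plan is to treat this as a one-dimensional variational problem and to locate its extremal explicitly, which is the classical route of Bliss. Writing $F(x)=\int_0^x f$, I would first exploit homogeneity: the quotient $\Phi[f]:=\int_0^\infty F^q\,\dif(-x^{-q/p^*})\big/\big(\int_0^\infty f^p\,\dif x\big)^{q/p}$ is invariant under the amplitude scaling $f\mapsto cf$ and, as a one-line substitution $x\mapsto\lambda x$ shows, under dilations as well. Hence maximizing $\Phi$ is the same as maximizing $\int_0^\infty F^q\,\dif(-x^{-q/p^*})$ subject to $\int_0^\infty f^p\,\dif x=1$, and the presence of two independent scaling symmetries already predicts a two-parameter extremal family, matching the claimed $f(x)=\gamma(\delta x^r+1)^{-(r+1)/r}$.

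Second, I would compute the first variation. Perturbing $f\mapsto f+\varepsilon h$ and using $\frac{d}{d\varepsilon}\big|_0 F^q=qF^{q-1}\int_0^x h$, Fubini moves the test function outside, so stationarity with a multiplier $\mu$ gives the integral identity $q\int_x^\infty F(s)^{q-1}\,\dif(-s^{-q/p^*})=\mu p\,f(x)^{p-1}$ for a.e.\ $x$ (no boundary terms arise in this integrated form). Differentiating in $x$ turns this into a first-order ODE linking $F$ and $f=F'$; solving it and fixing the two constants of integration by the scalings above yields $F(x)=\gamma x(\delta x^r+1)^{-1/r}$, whence $f=F'=\gamma(\delta x^r+1)^{-(r+1)/r}$ with $r=q/p-1$. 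The clean algebraic fact driving the solution is $r+1=q/p$, which makes $f$ a pure power of $F/x$.

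Third, I would evaluate the quotient at this extremal to extract the constant. With the substitution $u=\delta x^r$, both integrals collapse to Beta integrals of the form $\int_0^\infty u^{a-1}(1+u)^{-(a+b)}\dif u=B(a,b)$: the denominator gives $\tfrac1r\delta^{-1/r}B(1/r,(q-1)/r)$, while the numerator gives a multiple of $B\big(\tfrac{r+1}{r},\tfrac{q}{p^*r}\big)$. The apparent mismatch of the two Beta values is reconciled by the functional equation $z\Gamma(z)=\Gamma(z+1)$ together with $r+1+q/p^*=q$; after this reduction the $\delta$-dependence cancels, as it must by dilation invariance, and the ratio comes out to be exactly $k_{q,p}=\big(r/B(1/r,(q-1)/r)\big)^{1/p-1/q}$.

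The genuine obstacle is not the algebra above but the justification that this critical point is the global maximizer. Two standard routes are available: (i) prove existence of a maximizer directly — reduce, by a suitable rearrangement, to the case of nonincreasing $f$, normalize away the two symmetries, and extract a convergent maximizing sequence via a Helly-type selection argument on the family of functions $\{F\}$ — after which the maximizer must solve the Euler--Lagrange identity and hence coincides with the computed extremal; or (ii) give a direct proof of the inequality through a weighted H\"older step whose equality case is precisely $f_0=(\delta x^r+1)^{-(r+1)/r}$, followed by an integration by parts against $\dif(-x^{-q/p^*})$. It is worth noting that a naive single H\"older closes only when $p=q$ (where $p^*=q^*$), so the case $p<q$ genuinely requires the extremal itself to build the correct weight; I expect route (ii) to be the cleanest in practice, with the main care going into the vanishing of the boundary terms and the integrability needed to run Fubini and the integration by parts.
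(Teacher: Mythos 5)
The paper offers no proof of this statement: Lemma~\ref{Bli} is imported verbatim from Bliss's 1930 paper \cite{Bliss} and used as a black box, so there is no in-paper argument to compare yours against. Your sketch is essentially Bliss's original route --- the two scaling symmetries, the stationarity identity $q\int_x^{+\infty}F(s)^{q-1}\,\dif(-s^{-q/p^*})=\mu p f(x)^{p-1}$, the extremal $F(x)=\gamma x(\delta x^r+1)^{-1/r}$, and the Beta-function evaluation --- and your bookkeeping is correct: with $u=\delta x^r$ the denominator is $\gamma^p r^{-1}\delta^{-1/r}B(1/r,(q-1)/r)$, the numerator reduces to $\gamma^q r^{-1}\delta^{-(r+1)/r}B(1/r,(q-1)/r)$ via $\Gamma(1/r+1)=(1/r)\Gamma(1/r)$ and $q/(p^*r)=(q-1)/r-1$, the $\gamma$- and $\delta$-dependence cancels, and the quotient equals $\bigl(r/B(1/r,(q-1)/r)\bigr)^{r}$, whose $1/q$-th power is $k_{q,p}$ because $r/q=1/p-1/q$.

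The one substantive issue is the one you name yourself: everything above only identifies a critical point and computes its value, while the lemma asserts that this critical point is the global maximizer. Neither of your two routes is actually carried out, and route (i) is where the real work lies. After reducing to nonincreasing $f$, normalizing away the two symmetries, and extracting a Helly limit of the $F_n$, one still has to pass to the limit in $\int_0^{+\infty}F_n^q\,\dif(-x^{-q/p^*})$; this is delicate because the measure $\dif(-x^{-q/p^*})$ has infinite total mass near the origin, so uniform control of the contributions near $0$ and $+\infty$ along the maximizing sequence must be established. That compactness-plus-semicontinuity argument is the core of Bliss's paper and cannot be waved at. As a plan your proposal is faithful to the classical proof; as a proof it stops exactly at the step that cannot be omitted.
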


We will extend Bliss lemma to deal with general Borel measures on $\mathbb{R}$. First, let us recall some basic facts about any Borel measure $\nu$ on $\mathbb{R}$. Define its `cumulative distribution function' and `inverse cumulative distribution function' as:
$$S(x):=\nu((-\infty,x]),\qquad S^{-1}(y):=\inf\{x:S(x)\geq y\}.$$
Since $S$ is right-continuous and increasing, it is well known that
\begin{align}
\{y:S^{-1}(y)\leq x\}=\{y:y\leq S(x)\},&\qquad \{y: S^{-1}(y)>x\}=\{y:y>S(x)\},\label{inverse relation}\\
S(S^{-1}(y))\geq y,&\qquad S(S^{-1}(y)-)\leq y. \label{continuous relation}
\end{align}
In particular, if $S$ is continuous, then $S(S^{-1}(y))=y$.

Let $m$ denote the Lebesgue measure, for any $-\infty<a<b<+\infty$, we have from (\ref{inverse relation}) that
\begin{eqnarray*}
 m_{S^{-1}}((a,b])&:=&m(\{t:S^{-1}(t)\in(a,b]\})=m(\{t:t\in(S(a),S(b)]\})\\
  &=&\int_{S(a)}^{S(b)}\dif t=S(b)-S(a)=\nu((a,b]).
\end{eqnarray*}
Then the measure extension theorem implies that $m_{S^{-1}}=\nu$.

According to the integral transform theorem (see for example \cite[Theorem 39.C.]{halmos}), for any Borel set $\Gamma$ and measurable function $f$, it follows that
\begin{equation}\label{integral exchange2}
  \int_{\Gamma}f\dif\nu=\int_{\{y:\,S^{-1}(y)\in\Gamma\}}f\circ S^{-1}(y)\dif y.
\end{equation}

Now, we state our generalized Bliss lemma.
\begin{lem}\label{generalized bliss}
 Let $k_{q,p},\,S(x)$ prevail. For any $x\in\mathbb{R}$, the Borel measure $\widetilde{\nu}$ is defined by $\widetilde{\nu}((x,+\infty)):=S(x)^{-q/p^*}$.
If $S(+\infty)=+\infty$, then for any non-negative real function $f$ and $1<p\leq q<+\infty$, we have
 \begin{equation*}
   \left[\int_{\mathbb{R}}\widetilde{\nu}(\dif x)\left(\int_{(-\infty,x)}f(y)\nu(\dif y)\right)^q\right]^{1/q}
   \leq k_{q,p}\left[\int_{\mathbb{R}}f(y)^p\nu(\dif y)\right]^{1/p}.
 \end{equation*}
\end{lem}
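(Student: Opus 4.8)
The plan is to reduce the inequality to the classical Bliss Lemma \ref{Bli} on $(0,+\infty)$ by means of the integral transform (\ref{integral exchange2}). Throughout, set $g:=f\circ S^{-1}$ on $(0,+\infty)$ and $G(s):=\int_0^s g(t)\dif t$; since $f\geq0$, the function $G$ is nondecreasing, which will be decisive. Because $S\geq0$ and $S(+\infty)=+\infty$, one checks that $S^{-1}(t)\in\Rnum$ precisely for $t>0$, so applying (\ref{integral exchange2}) with $\Gamma=\Rnum$ gives the right-hand side as $\int_{\Rnum}f^p\dif\nu=\int_0^{+\infty}g(t)^p\dif t$.

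For the inner integral I would apply (\ref{integral exchange2}) with $\Gamma=(-\infty,x)$, the crucial feature being that this interval is \emph{open}. Using (\ref{inverse relation}) one has $\{t:S^{-1}(t)<x\}=\{t:t\leq S(x-)\}$ up to the single endpoint $S(x-)$, so that
\begin{equation*}
\int_{(-\infty,x)}f(y)\nu(\dif y)=\int_0^{S(x-)}g(t)\dif t=G(S(x-)).
\end{equation*}
Writing $I$ for the $q$-th power of the left-hand side of the lemma, this yields $I=\int_{\Rnum}G(S(x-))^q\,\widetilde\nu(\dif x)$.

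The core of the argument is to compare $I$ with the classical Bliss functional $J:=\int_0^{+\infty}G(u)^q\,\dif(-u^{-q/p^*})$ and to prove $I\leq J$. Both are governed by the same tail $u^{-q/p^*}$, since $\widetilde\nu((x,+\infty))=S(x)^{-q/p^*}$ while $\dif(-u^{-q/p^*})$ has tail $u^{-q/p^*}$. When $S$ is continuous we have $S(x-)=S(x)$ and $S(S^{-1}(u))=u$ by (\ref{continuous relation}), so the pushforward of $\widetilde\nu$ under $S$ is exactly $\dif(-u^{-q/p^*})$ and $I=J$. The subtlety, which I expect to be the main obstacle, is an \emph{atom} $x_0$ of $\nu$, where $S$ jumps from $a:=S(x_0-)$ to $b:=S(x_0)$; this produces an atom of $\widetilde\nu$ at $x_0$ of mass $a^{-q/p^*}-b^{-q/p^*}$, contributing $G(a)^q(a^{-q/p^*}-b^{-q/p^*})$ to $I$, whose matching piece in $J$ is $\int_{(a,b]}G(u)^q\dif(-u^{-q/p^*})$. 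As $G$ is nondecreasing, $G(a)\leq G(u)$ on $(a,b]$, so the atom's contribution to $I$ is dominated by that to $J$. It is exactly here that the open interval $(-\infty,x)$, which forces the left limit $G(S(x-))=G(a)$ rather than $G(b)$, makes the inequality point the right way. Assembling the continuous part (equality) and the atomic contributions (inequality) via the pushforward of $\widetilde\nu$ under $S$ then gives $I\leq J$.

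Finally I would invoke Bliss's Lemma \ref{Bli} for $g$ on $(0,+\infty)$: for $1<p<q<+\infty$,
\begin{equation*}
J^{1/q}=\left[\int_0^{+\infty}\dif(-u^{-q/p^*})\,G(u)^q\right]^{1/q}\leq k_{q,p}\left[\int_0^{+\infty}g(u)^p\dif u\right]^{1/p}.
\end{equation*}
Chaining $I\leq J$ with this bound and with $\int_0^{+\infty}g^p\dif u=\int_{\Rnum}f^p\dif\nu$ gives the claim. The borderline case $p=q$ is not covered by Lemma \ref{Bli}, but there $q/p^*=p-1$ and $J=(p-1)\int_0^{+\infty}u^{-p}G(u)^p\dif u$, so the classical Hardy inequality (\ref{conhardy}) applied to $g$ yields $J\leq(p-1)(p^*)^p\int g^p=k_{q,p}^q\int g^p$, settling this case as well.
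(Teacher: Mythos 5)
Your proposal is correct and follows essentially the same route as the paper: push everything to $(0,+\infty)$ via the integral transform associated with $S^{-1}$, use the monotonicity of $G$ together with the left limit $S(x-)$ forced by the open interval (the paper packages your atom-by-atom comparison into the single inequality $S(S^{-1}(u)-)\leq u$) to dominate by the Bliss functional, and then apply Lemma \ref{Bli}. The only real divergence is the case $p=q$, which you settle directly with the classical Hardy inequality (\ref{conhardy}) (and your constant computation $(p-1)(p^*)^p=k_{p,p}^p$ checks out), whereas the paper cites Prokhorov's theorem; both work.
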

\begin{proof}
In the case of $p=q$, the assertion holds as a result of Remark~\ref{the rem of thm}.(1) and \cite[Theorem 1]{prok}.

In the case of $p<q$, set $\widetilde{m}(\dif x)=\dif(-x^{-q/p^*})$. Since $S(+\infty)=+\infty$, we have that  for any $x\in\mathbb{R}$,
\begin{eqnarray*}
 \widetilde{m}_{S^{-1}}((x,+\infty))&:=&\widetilde{m}(\{t:S^{-1}(t)\in(x,+\infty)\})=\widetilde{m}(\{t: t\in(S(x),+\infty)\})\\
  &=&\int_{S(x)}^{+\infty}\dif(-t^{-q/p^*})=S(x)^{-q/p^*}=\widetilde{\nu}((x,+\infty)).
\end{eqnarray*}
Then we have $\widetilde{m}_{S^{-1}}=\widetilde{\nu}$ by measure extension theorem. Moreover, the integral transform formula implies that for any measurable function $g$,
 \begin{equation}\label{integral exchange}
   \int_{\mathbb{R}}g(x)\widetilde{\nu}(\dif x)=\int_0^{+\infty} g\circ S^{-1}(x)\dif(-x^{-q/p^*}).
 \end{equation}
By the dominated convergence theorem, (\ref{inverse relation}) and (\ref{integral exchange2}), we have
\begin{eqnarray*}
  \int_{(-\infty,u)}f(y)\nu(\dif y)&=&\int_{\mathbb{R}}I_{(-\infty,u)}(y)f(y)\nu(\dif y)\\
  &=&\int_{\mathbb{R}}\lim_{u_n\uparrow u}I_{(-\infty,u_n]}(y)f(y)\nu(\dif y)
  =\lim_{u_n\uparrow u}\int_{(-\infty,u_n]}f(y)\nu(\dif y)\\
  &=&\lim_{u_n\uparrow u}\int_{(0,S(u_n)]}f\circ S^{-1}(y)\dif y
  \leq\int_{(0,S(u-)]}f\circ S^{-1}(y)\dif y.
\end{eqnarray*}
Furthermore, substituting $g(x)=\left(\int_{(-\infty,x)}f\dif\nu\right)^q$ into (\ref{integral exchange}),
we obtain from (\ref{continuous relation}) that
\begin{eqnarray*}
 \int_{\mathbb{R}}\widetilde{\nu}(\dif x)\left(\int_{(-\infty,x)}f(y)\nu(\dif y)\right)^q
&=&\int_0^{+\infty}\dif(-x^{-q/p^*})\left(\int_{(-\infty,S^{-1}(x))}f(y)\nu(\dif y)\right)^q\\
&\leq&\int_0^{+\infty}\dif(-x^{-q/p^*})\left(\int_{(0,S(S^{-1}(x)-)]}f\circ S^{-1}(y)\dif y\right)^q\\
&\leq&\int_0^{+\infty}\dif(-x^{-q/p^*})\left(\int_{(0,x]}f\circ S^{-1}(y)\dif y\right)^q.
\end{eqnarray*}
According to Lemma \ref{Bli} and (\ref{integral exchange2}), we have
\begin{eqnarray*}
\left[\int_0^{+\infty}\dif(-x^{-q/p^*})\left(\int_0^xf\circ S^{-1}(y)\dif y\right)^q\right]^{1/q}
&\leq&k_{q,p}\left(\int_0^{+\infty}\left(f\circ S^{-1}(x)\right)^p\dif x\right)^{1/p}\\
&=&k_{q,p}\left(\int_{\mathbb{R}}f(y)^p\nu(\dif y)\right)^{1/p}.
\end{eqnarray*}
\end{proof}
The next technical lemma shows that if one measure is dominated by another measure, then so does their integrations.\begin{lem}\label{compare}
  Let $\mu_1$ and $\mu_2$ be two $\sigma$-finite Borel measures. If
  \begin{equation*}
  \mu_1((x,+\infty))\leq\mu_2((x,+\infty)),\quad \forall~x\in\mathbb{R},
   \end{equation*}
 then for any non-negative increasing function $f$, we have
  \begin{equation*}
  \int_{\mathbb{R}}f(x)\mu_1(\dif x)\leq\int_{\mathbb{R}}f(x)\mu_2(\dif x).
  \end{equation*}
\end{lem}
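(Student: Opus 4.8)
The plan is to reduce the inequality between the two integrals to the pointwise comparison of $\mu_1$ and $\mu_2$ on half-lines, which is precisely the hypothesis. First I would use the layer-cake representation: since $f\geq 0$, for every $x$ we have $f(x)=\int_0^{+\infty}I_{\{f>s\}}(x)\,\dif s$, where $I$ denotes the indicator function. Applying Tonelli's theorem (all integrands are non-negative) gives
\begin{equation*}
\int_{\mathbb{R}}f(x)\,\mu_i(\dif x)=\int_0^{+\infty}\mu_i(\{x:f(x)>s\})\,\dif s,\qquad i=1,2.
\end{equation*}
Hence it suffices to show $\mu_1(\{f>s\})\leq\mu_2(\{f>s\})$ for every $s>0$ and then integrate in $s$.

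Second, I would exploit the monotonicity of $f$ to identify the super-level sets. Because $f$ is increasing, each set $E_s:=\{x:f(x)>s\}$ is an interval unbounded to the right, so it is one of $\emptyset$, $\mathbb{R}$, $(c_s,+\infty)$, or $[c_s,+\infty)$ for some $c_s\in\mathbb{R}$. On the open half-lines $(c_s,+\infty)$ the inequality $\mu_1(E_s)\leq\mu_2(E_s)$ is nothing but the hypothesis; the empty case is trivial; and the case $E_s=\mathbb{R}$ follows by letting $x\to-\infty$ in $\mu_1((x,+\infty))\leq\mu_2((x,+\infty))$ together with continuity of measures from below (an increasing union, so no finiteness is required).

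The step I expect to be the main obstacle is the closed case $E_s=[c_s,+\infty)$, which arises exactly at the (at most countably many) jumps of $f$ and can carry positive Lebesgue mass in the variable $s$, so it cannot simply be discarded. To handle it I would pass from open to closed half-lines by writing $[c,+\infty)=\bigcap_{n\geq 1}(c-1/n,+\infty)$ and invoking continuity of measures from above, which would yield $\mu_i([c,+\infty))=\lim_{n}\mu_i((c-1/n,+\infty))$ and therefore $\mu_1([c,+\infty))\leq\mu_2([c,+\infty))$. This is the delicate point, since continuity from above requires $\mu_i$ to be finite on some $(c-1/n_0,+\infty)$; this is where the $\sigma$-finiteness, together with the finiteness of the tails occurring in the intended applications, must be invoked, and a careful statement should ensure this finiteness actually holds. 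Granting it, the inequality $\mu_1(E_s)\leq\mu_2(E_s)$ holds for all $s$, and integrating over $s\in(0,+\infty)$ in the displayed identity gives $\int_{\mathbb{R}}f\,\dif\mu_1\leq\int_{\mathbb{R}}f\,\dif\mu_2$, as claimed.
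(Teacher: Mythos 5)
Your argument is essentially the paper's own proof: the same layer-cake reduction via Tonelli, the same observation that the super-level sets of an increasing $f$ are half-lines of the form $(a,+\infty)$ or $[a,+\infty)$, and the same passage from open to closed half-lines via continuity from above. The finiteness issue you flag is resolved in the paper exactly along the lines you indicate: assuming without loss of generality that $\mu_2((x,+\infty))<+\infty$ (otherwise the inequality is trivial), local finiteness of the Borel measure gives $\mu_2((x-1/n,x])<+\infty$, hence $\mu_1((x-1/n,+\infty))\leq\mu_2((x-1/n,+\infty))<+\infty$, so continuity from above applies to both measures.
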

\begin{proof}
According to Fubini theorem, for any non-negative increasing function $f$ and $\sigma$-finite measures $\mu_i\,(i=1,2)$,
\begin{eqnarray*}
  \int f(x)\mu_i(\dif x)&=&\int_{\{x:f(x)>0\}}f(x)\mu_i(\dif x)\\
  &=&\int_{\{x:f(x)>0\}}\mu_i(\dif x)\int_0^{f(x)}\dif t\\
  &=&\int_{\mathbb{R}} I_{\{x:f(x)>0\}}\mu_i(\dif x)\int_0^{+\infty} I_{\{t:f(x)>t\}}\dif t\\
  &=&\int_0^{+\infty}\dif t\int_{\mathbb{R}} I_{\{x:f(x)>t\}}\mu_i(\dif x)\\
  &=&\int_0^{+\infty}\mu_i(\{x:f(x)>t\})\dif t.
\end{eqnarray*}
Since $f$ is an increasing function, it is easy to check that for any given $t\geq0$, the set $\{x:f(x)>t\}$ have the form of $(a,+\infty)$ or $[a,+\infty)$.
Thus, it suffices to show that
  \begin{equation}\label{dominated2}
  \mu_1([x,+\infty))\leq\mu_2([x,+\infty)),\,\,x\in\mathbb{R}.
 \end{equation}
Without loss of generality, suppose for any given $x\in\mathbb{R}$, $\mu_2((x,+\infty))<+\infty$. Since $\mu_2$ is a Borel measure, we have $\mu_2((x-1/n,x])<+\infty$ for any $n\geq 1$. Furthermore,
\begin{equation*}
  \mu_1((x-1/n,+\infty))\leq\mu_2((x-1/n,+\infty))=\mu_2((x-1/n,x])+\mu_2((x,+\infty))<+\infty.
\end{equation*}
Then (\ref{dominated2}) holds by the upper continuity of $\mu_i(i=1,2)$.
\end{proof}
\begin{proof}[Proof of Theorem \ref{sharp estimate}:] We divide the proof into two steps:

(i) First, we prove the first assertion provided $\nu(\mathbb{R})=+\infty$.
To avoid the trivial case, assume $B<+\infty$. Let
\begin{equation*}
S(x)=\nu((-\infty,x]),\,\,\,\,\widetilde{\nu}((x,+\infty))=S(x)^{-q/p^*}.
\end{equation*}
By the definition of $B$, we have that for any $x\in\mathbb{R}$,
\begin{equation*}
\mu((x,+\infty))\leq\mu([x,+\infty))\leq B^q \nu((-\infty,x])^{-q/p^*}=B^qS(x)^{-q/p^*}=B^q\widetilde{\nu}((x,+\infty)).
\end{equation*}
According to Lemma \ref{compare} and Lemma \ref{generalized bliss}, for any non-negative function $f$, we have
\begin{eqnarray*}
  \int_{\mathbb{R}}\mu(\dif x)\left(\int_{(-\infty,x)}f(y)\nu(\dif y)\right)^q
  &\leq&B^q\int_{\mathbb{R}}\widetilde{\nu}(\dif x)\left(\int_{(-\infty,x)}f(y)\nu(\dif y)\right)^q\\
  &\leq& k_{q,p}^q B^q\left(\int_{\mathbb{R}}f(x)^p\nu(\dif x)\right)^{q/p}.
\end{eqnarray*}
Thus, $A\leq k_{q,p}B$. In addition, we have $B\leq A$ according to \cite[Theorem 1]{prok}. Hence, $B\leq A\leq k_{q,p}B$.

(ii) The next step is to remove the condition $\nu(\mathbb{R})=+\infty$. This is easy to overcome by \cite[Lemma 4.2]{chen15}.
\end{proof}
\section{Proof of Corollaries \ref{prop1} and \ref{mixed forms}}
First, we recall the standard Bernoulli measure on Cantor set in $\mathbb{R}$.
Let $\Omega_i=\{0,1\},~i=0,1,\cdots$, and $\rho_m$ be the uniform probability measure on $\Omega^m:=\prod_{i=0}^m\Omega_i$,
that is $\rho_m(\{x\})=2^{-(m+1)}$ for any $(x_0,x_1,\cdots,x_m)\in\Omega^m$. Consider the map $J:\,\Omega^m\rightarrow[0,1]$,
\begin{equation*}
  \forall\,x=(x_0,x_1,\cdots,x_m)\in\Omega^m,
  \,\,\,\,J(x):=a_0^mx_0+a_1^mx_1+\cdots+a_m^mx_m,
\end{equation*}
where $a_k^m=3^{-m}b_k,\,b_0=1,\,b_k=2\cdot3^{k-1}$.

Let $K_m=J(\Omega^m)$. Then the closure of $\cup_{m=0}^{+\infty} K_m$ is Cantor set in $[0,1]$, denoted by $\mathbb{K}$.
Let $\lambda_m=\rho_m\circ J^{-1}$, then $\lambda_m(\{p\})=2^{-(m+1)},\,\forall~p\in K_m$.

Following \cite{Kigami}, we know that there exists a unique probability measure $\lambda$ on $\mathbb{K}$ such that
$\lambda_m\Rightarrow\lambda$, that is, $\forall~f\in C(\mathbb{K}),\,\,\lim_{m\rightarrow+\infty}\int_{K_m}f\dif\lambda_m=\int_{\mathbb{K}}f\dif\lambda$,
thus $\lambda$ is called the standard Bernoulli (probability) measure on $\mathbb{K}$. Let $\widetilde{\mathbb{K}}=\cup_{n=0}^{+\infty} (n+\mathbb{K})$
be Cantor set on $[0,+\infty)$ and denote again by $\lambda$ the extended Bernoulli measure on $\widetilde{\mathbb{K}}$.

Under our settings, we can have an analogue of Hardy's inequality on Cantor set, see Proposition \ref{prop1} in section \ref{introd}. Now, we give the proof of these results.
\begin{proof}[Proof of Proposition \ref{prop1}:]
We need to calculate $B$ in (\ref{B}).

(i) To prove (\ref{cantor11}) and (\ref{cantor12}), set $\nu=\lambda$ and $\mu(\dif x)=\lambda([0,x])^{-q}\lambda(\dif x)$ on $[0,+\infty)$. Clearly,
\begin{equation*}
  B=\sup_{x\in[0,+\infty)}\lambda([0,x])^{1/p^*}\left(\int_x^{+\infty}\frac{\lambda(\dif t)}{\lambda([0,t])^q}\right)^{1/q}.
\end{equation*}
Let $\Lambda(x)=\lambda([0,x])$. Then $\Lambda$ is an increasing continuous function and $\Lambda(+\infty)=+\infty$.
Define $\Lambda^{-1}(y)=\inf\{x:\Lambda(x)\geq y\}$, it is easy to see that $\Lambda(\Lambda^{-1}(y))=y$.
The integral transform formula implies that for any Borel measurable function $g$
\begin{equation*}
  \int_x^{+\infty} g(\Lambda(t))\lambda(\dif t)=\int_{\Lambda(x)}^{\Lambda(+\infty)}g(\Lambda(\Lambda^{-1}(t)))\dif t
  =\int_{\Lambda(x)}^{+\infty} g(t)\dif t.
\end{equation*}
Take $g(t)=t^{-q}$ in this identity, we get
\begin{eqnarray*}
 \int_x^{+\infty}\frac{\lambda(\dif t)}{\lambda([0,t])^q}&=&\int_x^{+\infty}\Lambda(t)^{-q}\lambda(\dif t)=\int_{\Lambda(x)}^{+\infty} t^{-q}\dif t\\
 &=&(q-1)^{-1}\Lambda(x)^{1-q}=(q-1)^{-1}\lambda([0,x])^{1-q}.
\end{eqnarray*}
Since $p\leq q$, we have $p^*\geq q^*$. Hence,
\begin{equation*}
  B=(q-1)^{-1/q}\sup_{x\in[0,+\infty)}\lambda([0,x])^{1/p^*-1/q^*}=
         \left\{
      \begin{array}{ll}
      (q-1)^{-1/q}, &p=q,\\
      +\infty, &\text{otherwise}.
      \end{array}
\right.
\end{equation*}
In the case of $p=q$, we have from Remark~\ref{the rem of thm}.(1) and $p/(p-1)=p^*$ that
\begin{align*}
  k_{p,p}B=p^{1/p}{p^*}^{1/p^*}(p-1)^{-1/p}=\left(\frac{p}{p-1}\right)^{1/p}\cdot\left(\frac{p}{p-1}\right)^{1/p^*}=\frac{p}{p-1}.
\end{align*}

(ii) To prove (\ref{cantor2}), let $\mu(\dif x)=x^{-q}\lambda(\dif x)$ and $\nu$ be the Lebesgue measure on $[0,+\infty)$. It is obvious that
\begin{equation*}
 B=\sup_{x\in[0,+\infty)}x^{1/p^*}\left(\int_x^{+\infty} t^{-q}\lambda(\dif t)\right)^{1/q}=\sup_{x\in\widetilde{\mathbb{K}}}x^{1/p^*}\left(\int_x^{+\infty} t^{-q}\lambda(\dif t)\right)^{1/q}.
\end{equation*}
Take $x_m=3^{-m}$ to derive
\begin{eqnarray*}
  (x_m)^{1/p^*}\left(\int_{[x_m,+\infty)}t^{-q}\lambda(\dif t)\right)^{1/q}
  &=&\left(3^{-m}\right)^{1/p^*}\left(\int_{[3^{-m},+\infty)}t^{-q}\lambda(\dif t)\right)^{1/q}\\
  &\geq&\left(3^{-m}\right)^{1/p^*}\left[\left(3^{-m}\right)^{-q}\cdot2^{-(m+1)}\right]^{1/q}\\
  &=& 2^{-1/q}\cdot\left(\frac{3^{1/p}}{2^{1/q}}\right)^m.
\end{eqnarray*}
Since $1<p\leq q<+\infty$, we have $3^{1/p}\geq 3^{1/q}>2^{1/q}$. Then we get
\begin{equation*}
  2^{-1/q}\cdot\left(\frac{3^{1/p}}{2^{1/q}}\right)^m \longrightarrow +\infty,\,\,\,\,\text{if } m\rightarrow +\infty.
\end{equation*}
Consequently, we have $B=+\infty$.
\end{proof}
\begin{proof}[Proof of Corollary \ref{mixed forms}:]
To prove (\ref{mixed1}), let $\mu$ be the measure on $\mathbb{N}$ with $\mu_n=n^{-p}$ and $\nu(\dif x)=\dif x$ on $[1,+\infty)$. Clearly,
\begin{equation*}
  B=\sup_{x\geq1}(x-1)^{1/p^*}\left(\sum_{k\geq x}k^{-p}\right)^{1/p}=\sup_{n\geq1}(n-1)^{1/p^*}\left(\sum_{k\geq n}k^{-p}\right)^{1/p}.
\end{equation*}
For any $n\in\mathbb{Z}^+$, on the one hand,
\begin{align*}
   \sum_{k\geq n}k^{-p}=\sum_{k\geq n}\int_{k-1}^kk^{-p}\dif x\leq\sum_{k\geq n}\int_{k-1}^kx^{-p}\dif x=\frac{(n-1)^{1-p}}{p-1}.
\end{align*}
Then we have
\begin{align*}
  B\leq(p-1)^{-1/p}\sup_{n\geq1}(n-1)^{1/p^*-1/p^*}=(p-1)^{-1/p}.
\end{align*}
On the other hand,
\begin{align*}
   \sum_{k\geq n}k^{-p}=\sum_{k\geq n}\int_k^{k+1}k^{-p}\dif x\geq\sum_{k\geq n}\int_k^{k+1}x^{-p}\dif x=\frac{n^{1-p}}{p-1}.
\end{align*}
Then we have
\begin{align*}
  B\geq(p-1)^{-1/p}\sup_{n\geq1}\left(\frac{n-1}{n}\right)^{1/p^*}=(p-1)^{-1/p}.
\end{align*}
Consequently,
\begin{equation*}
  B=(p-1)^{-1/p}.
\end{equation*}
Then we have $k_{p,p}B=\frac{p}{p-1}$.

To prove (\ref{mixed2}), let $\nu$ be the counting measure on $\mathbb{N}$ and $\dif\mu(x)=x^{-q}\dif x$ on $[1,+\infty)$,
we get
\begin{equation*}
  B=\sup_{x\geq 1}[x]^{1/p^*}\left(\int_x^{+\infty} t^{-q}\dif t\right)^{1/q}=(q-1)^{-1/q}\sup_{x\geq 1}[x]^{1/p^*}x^{-1/q^*}=(q-1)^{-1/q}.
\end{equation*}
\end{proof}
\bigskip

{\bf Acknowledgements}\    ~Research supported in part by 985 Project, NSFC(No 11131003, 11501531, 11571043), SRFDP(No 20100003110005) and the Fundamental Research Funds for the Central Universities.



\end{document}